
\documentclass[12pt]{amsart}
\usepackage{amscd,amssymb,graphics}
\usepackage{hyperref}

\usepackage{amsfonts}
\usepackage{amsmath}
\usepackage{amsxtra}
\usepackage{latexsym}
\usepackage[mathcal]{eucal}

\input xy
\xyoption{all}
\usepackage{epsfig}

\oddsidemargin 0.1875 in \evensidemargin 0.1875in
\textwidth 6 in 
\textheight 230mm \voffset=-4mm

\newtheorem{thm}{Theorem}[section]

\newtheorem{cor}[thm]{Corollary}
\newtheorem{lem}[thm]{Lemma}
\newtheorem{prop}[thm]{Proposition}

\theoremstyle{definition}
\newtheorem{defin}[thm]{Definition}

\newtheorem{ex}[thm]{Example}

\theoremstyle{remark}
\newtheorem{remark}[thm]{Remark}
\newtheorem{remarks}[thm]{Remarks}

\numberwithin{equation}{section}



\newcommand{\delete}[1]{} 
\newcommand{\nt}{\noindent}

\def\eps{{\varepsilon}}
\newcommand{\sk}{\vskip 0.2cm}

\newcommand{\ben}{\begin{enumerate}}

\newcommand{\een}{\end{enumerate}}
\newcommand{\bit}{\begin{itemize}}

\newcommand{\eit}{\end{itemize}}

\def\R {{\mathbb R}}

\def\Z {{\mathbb Z}}

\def\P {{\mathbb P}}

\newcommand{\id}{{\rm{id}}}



\def\QED{\nobreak\quad\ifmmode\roman{Q.E.D.}\else{\rm Q.E.D.}\fi}

\def\a{\alpha}


\newcommand{\la}{\lambda}

\newcommand{\ep}{\epsilon}




\begin{document}

\title[]
{On fixed point theorems and nonsensitivity}

\author[]{Eli Glasner}
\address{Department of Mathematics,
Tel-Aviv University, Ramat Aviv, Israel}
\email{glasner@math.tau.ac.il}
\urladdr{http://www.math.tau.ac.il/$^\sim$glasner}

\author[]{Michael Megrelishvili}
\address{Department of Mathematics,
Bar-Ilan University, 52900 Ramat-Gan, Israel}
\email{megereli@math.biu.ac.il}
\urladdr{http://www.math.biu.ac.il/$^\sim$megereli}

\date{September 28, 2010}

\begin{abstract}
Sensitivity is a prominent aspect of chaotic behavior of a
dynamical system. We study the relevance of
nonsensitivity to fixed point theory in affine dynamical systems.
We prove a fixed point theorem which extends Ryll-Nardzewski's theorem
and some of its generalizations.
Using the theory of hereditarily nonsensitive dynamical systems we
establish left amenability of $Asp(G)$,
the algebra of Asplund functions on a topological group $G$ (which contains
the algebra $WAP(G)$ of weakly almost periodic functions).
We note that, in contrast to $WAP(G)$
where the invariant mean is unique,
for some groups (including the integers)
there are uncountably many invariant means on $Asp(G)$.
Finally we observe that dynamical systems in the larger class of tame
$G$-systems need not admit an invariant probability measure.
\end{abstract}

\keywords{amenability, distal dynamical system, fixed point theorem,
fragmentability, invariant measure, nonsensitivity,
Ryll-Nardzewski's theorem, tame dynamical system}

\thanks{This research was partially supported by Grant No 2006119
from the United States-Israel Binational Science Foundation (BSF)}

\thanks{{\em 2000 Mathematical Subject Classification} 37Cxx, 37B05, 46Axx, 52A07, 54H20}

\maketitle

\setcounter{tocdepth}{1}


\section*{Introduction}

Let $S$ be a semigroup, $X$ a topological space, and $S \times X \to X$ a semigroup action of $S$ on $X$ such that the
translations $\la_s: X \to X,\ s \in S$, written usually as
$\la_s(x) = sx$, are continuous maps.
We will say that the pair $(S,X)$ is a {\em dynamical system},
or that $X$ is an {\em $S$-system}.
If in addition $X=Q$ is a convex and compact subset of a locally
convex vector space and each $\la_s: Q \to Q$ is an affine map,
then the $S$-system $(S,Q)$ is called an {\em affine dynamical system}.
We use the symbol $G$ instead of $S$ when dealing with group actions, and
we require in this case that the group identity acts as the identity map. 
The topological and locally convex vector spaces (over the reals) in this paper are assumed to be Hausdorff.

 Let $\xi$ be a uniform structure on an $S$-system $X$. We say that the action of $S$ on $X$ 
(or, just $X$, or $S$, where the action is understood) is $\xi$-\emph{distal} 
if every pair $x, y$ of distinct points in $X$ is $\xi$-distal, i.e.,
there exists an entourage $\eps \in \xi$ such that
$$
(sx,sy) \notin \eps \ \ \forall s \in S.
$$ 
We recall the following well known fixed point theorem
of Ryll-Nardzewski \cite{Ryll}.

\begin{thm} \label{Ryll-Na}
\emph{(Ryll-Nardzewski)}
Let $V$ be a locally convex vector space equipped
with its uniform structure $\xi$.
Let $Q$ be an affine compact $S$-system such that
\ben
\item
$Q$ is a weakly compact subset in $V$.
\item
$S$ is $\xi$-distal on $Q$.
\een
Then $Q$ contains a fixed point.
\end{thm}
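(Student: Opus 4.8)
The plan is to produce a minimal nonempty weakly closed convex $S$-invariant subset $K\subseteq Q$ and to show it is a single point, which must then be fixed. For existence I would invoke Zorn's lemma on the family of all nonempty weakly closed convex $S$-invariant subsets of $Q$: the family contains $Q$, and weak compactness (via the finite intersection property) guarantees that every descending chain has nonempty intersection, yielding a minimal element $K$. If $K=\{x_0\}$, then $sx_0\in K$ forces $sx_0=x_0$ for all $s\in S$, so $x_0$ is the desired fixed point.

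To see that $K$ is a singleton I would argue by contradiction, assuming distinct $x_1,x_2\in K$. Hypothesis (2) then supplies a single continuous seminorm $p$ and a number $\delta>0$ (derived from an entourage witnessing $\xi$-distality of the pair) with $p(sx_1-sx_2)\ge\delta$ for every $s\in S$. Setting $z=\tfrac12(x_1+x_2)\in K$, I would note that each affine continuous $\la_s$ maps $\mathrm{co}(Sz)$ into itself and, using Mazur's theorem that the weak and original closures of a convex set coincide, maps $\overline{\mathrm{co}}(Sz)$ into itself; hence $\overline{\mathrm{co}}(Sz)$ is again a member of the family, and minimality forces $K=\overline{\mathrm{co}}(Sz)$.

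The decisive step is Namioka's slicing lemma: a weakly compact convex set is fragmented by any continuous seminorm, so one can find $f\in V^{*}$ and $\alpha\in\R$ for which the slice $\Sigma=\{y\in K: f(y)>\alpha\}$ is nonempty with $\diam_p(\Sigma)<\delta$. Writing $M=\sup_{y\in K}f(y)=\sup_{s\in S}f(sz)$ (valid since $K=\overline{\mathrm{co}}(Sz)$ and $f$ is weakly continuous) and choosing $s\in S$ with $f(sz)>M-\tfrac13(M-\alpha)$, affineness gives $f(sz)=\tfrac12\bigl(f(sx_1)+f(sx_2)\bigr)$ while $f(sx_1),f(sx_2)\le M$; a short estimate then forces each $f(sx_i)>\alpha$, placing both $sx_1$ and $sx_2$ inside $\Sigma$. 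Consequently $p(sx_1-sx_2)\le\diam_p(\Sigma)<\delta$, contradicting distality, so $K$ must be a single point.

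I expect the genuine obstacle to be precisely this slicing lemma — the fragmentability of weakly compact convex sets by a continuous seminorm — since the remaining ingredients are routine manipulations of affineness and the finite intersection property. It is worth stressing that this is exactly the fragmentability mechanism organizing the rest of the paper, so the tool powering the announced generalizations already sits at the foundation of the classical statement.
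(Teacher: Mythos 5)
Your argument is correct, but it is genuinely different from the paper's: you reproduce the classical Namioka--Asplund geometric proof (Zorn's lemma gives a minimal nonempty weakly compact convex invariant $K$; the midpoint trick and minimality give $K=\overline{\mathrm{co}}(Sz)$; a slice of small $p$-diameter then contradicts distality, and your $\tfrac13$-estimate is sound, since $f(sx_i)\ge 2f(sz)-M>M-\tfrac23(M-\alpha)>\alpha$). The paper instead obtains Theorem \ref{Ryll-Na} as Corollary \ref{old-res}(1) of Theorem \ref{general-fpt}: Lemma \ref{l:fr} shows weakly compact sets are $(\tau,\xi)$-fragmented (via Namioka's joint continuity theorem), the lifting Lemma \ref{lifting} transfers $\xi$-distality to distality in the compact topology on a minimal subsystem, Furstenberg's Theorem \ref{Furst} then yields an invariant measure, and the barycenter map produces the fixed point. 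One caveat on your key step: fragmentability by itself only yields small nonempty relatively weakly open pieces, \emph{not} slices; the upgrade ``fragmented compact convex $\Rightarrow$ slices of small diameter'' (dentability) is a genuinely separate lemma, proved e.g.\ by localizing at extreme points via Choquet's lemma, or by factoring through the normed quotient associated with the seminorm $p$ and invoking dentability of weakly compact convex sets in Banach spaces. That lemma is true, so your proof stands, and it has the merit of avoiding Furstenberg's deep structure-theoretic theorem entirely. What the paper's route buys is scope: your slicing argument needs the convex body $Q$ itself to be weakly compact and the action to be distal on all of $Q$ (your minimal $K$ and the pair $x_1,x_2$ range over $Q$), whereas Theorem \ref{general-fpt} only requires fragmentability (or mere nonsensitivity) and distality on a closed invariant subset $X\subset Q$, and only through a not necessarily injective comparison map into an auxiliary uniform space --- flexibility that powers Corollary \ref{old-res}(2)--(5) and Corollary \ref{c:NP} and which the slice geometry cannot reach.
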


In the special case where $Q$ is compact already in the $\xi$-topology,
we get an equivalent version of Hahn's fixed point theorem \cite{Hahn}.
There are several geometric proofs of Theorem \ref{Ryll-Na}, see
Namioka and Asplund \cite{N-A}, Namioka \cite{Na-Isr, Na72, Na-affine},
Glasner \cite{Gl-comp, Gl-book}, Veech \cite{Veech},
and Hansel-Troallic \cite{Ha-Tr}.
The subject is treated in several books,
see for example \cite{Gl-book}, Berglund-Junghenn-Milnes \cite{BJM},
and Granas-Dugundji \cite{Gra-Dug}.

A crucial step in these proofs is the lifting of distality
on $Q$ from $\xi$ to the original compact topology.

In Section \ref{Sec-fp} we present a short proof of a fixed point theorem
(Theorem \ref{general-fpt}) which
covers several known generalizations of Theorem \ref{Ryll-Na}
(see Corollary \ref{old-res}).
Moreover, we apply Theorem \ref{general-fpt} in some cases where
Ryll-Nardzewski's theorem, or its known generalizations,
do not seem to work.
See for example Corollary \ref{c:NP}, where we apply our results to
weak-star compact affine dynamical systems in a large class of locally convex spaces.

The main tools of the present paper are the concepts of nonsensitivity and fragmentability.
The latter originally comes from Banach space theory and has several applications in Topology and recently also in Topological Dynamics.
Fragmentability (or the weaker concept of nonsensitivity) allows us in
Lemma \ref{lifting} to simplify and
strengthen the methods of Veech and Hansel-Troallic for lifting the distality property.
As in the proofs of Namioka \cite{Na72} and Veech \cite{Veech}, the strategy
is to reduce the problem at hand to the situation where the existence of an invariant measure follows from the following fundamental theorem of
Furstenberg \cite{Furst63}.

\begin{thm} \label{Furst}
\emph{(Furstenberg)}
Every distal compact dynamical system admits an invariant probability measure.
\end{thm}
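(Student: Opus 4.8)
The plan is to route everything through the enveloping (Ellis) semigroup. Write $E = E(X) \sbs X^X$ for the pointwise closure of $\{\la_s : s \in G\}$ in $X^X$; it is a compact right topological semigroup, and by Ellis's theorem distality of $X$ is equivalent to $E$ being a group. First I would reduce to the minimal case. Since $E$ is a group and $\ov{Gx} = Ex$ for every $x$, any $y = px \in Ex$ satisfies $Ey = Epx = Ex$, so the orbit closures partition $X$ into minimal sets; an invariant probability measure on any single minimal subsystem extends by zero to an invariant measure on all of $X$. Hence I may assume $X$ is minimal, in which case the orbit map $p \mapsto px_0$ shows $Ex_0 = \ov{Gx_0} = X$, i.e. the group $E$ acts transitively and $X \cong E/H$ for the stabilizer $H$ of a base point $x_0$.

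The core step is to manufacture a Haar measure on the compact right topological group $E$. Right translations $p \mapsto pq$ are continuous by definition, while each left translation $L_s : p \mapsto sp$ is continuous because $s \in G$ acts as a homeomorphism of $X$; thus $G$ lies in the topological center of $E$ and is dense there. Invoking Namioka's theorem \cite{Na72}, that such a compact right topological group carries a probability measure $\la$ invariant under these left translations, I push $\la$ forward along the continuous evaluation $\pi = \eva_{x_0} : E \to X$, $p \mapsto px_0$, and set $\mu = \pi_*\la$. Then for every $g \in C(X)$ and $s \in G$,
\[
\int_X g(sx)\,d\mu(x) = \int_E g\bigl((sp)x_0\bigr)\,d\la(p) = \int_E g(px_0)\,d\la(p) = \int_X g\,d\mu,
\]
the middle equality being the $L_s$-invariance of $\la$; hence $\mu$ is the desired $G$-invariant probability measure.

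The main obstacle is precisely the construction of $\la$: because $E$ is only right topological and its elements need not be continuous self-maps of $X$, there is no classical Haar measure and the usual averaging arguments fail. This is exactly where the tools of the present paper enter — a Namioka-type joint continuity (Baire category, resp. fragmentability) argument shows the multiplication of $E$ is jointly continuous on a dense set, which suffices to run the Haar measure construction. An alternative but heavier route would prove Furstenberg's structure theorem, presenting a minimal distal $X$ as a (possibly transfinite) tower of isometric extensions and propagating relatively invariant measures up the tower; the enveloping semigroup argument above is the more economical one and dovetails with the fragmentability methods used elsewhere in this work.
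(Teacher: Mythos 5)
You should first be aware that the paper contains no proof of Theorem \ref{Furst} at all: it is imported from the literature, with the attribution trail running through Furstenberg's structure theorem for minimal distal metric systems \cite{Furst63}, Ellis's extension of that structure theorem to nonmetrizable systems \cite{Ellis-P}, and the remark that for a distal semigroup action the enveloping semigroup is a group, which handles the semigroup case; Namioka's paper \cite{Na72} is cited for a proof cast as a fixed point theorem. Your outline travels exactly this Ellis--Namioka road, and its formal parts are correct: the partition of a distal system into minimal sets (since $E=E(X)$ is a group), the reduction to the minimal case, the continuity of the left translations $L_s$ on $E$ (post-composition with $\la_s$ is continuous in the pointwise topology), the density of the image of $G$ in the topological centre, and the pushforward computation $\mu=\pi_*\la$ are all sound. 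One incidental point in your favor: although you write the argument for groups while the theorem covers semigroup actions, distality forces $E$ to be a group, so each $\la_s$ is a continuous bijection of a compact Hausdorff space, hence a homeomorphism, and the extension to semigroups is routine --- this is precisely the paper's own remark.

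The genuine gap sits exactly where you flag it: the existence of the left-invariant measure $\la$ on the compact right topological group $E$. Note first the latent circularity: the system $(G,E)$ given by left translations is itself a compact distal dynamical system (its enveloping semigroup is again the group $E$), so asserting that it carries an invariant measure is an instance of the very theorem being proved; the appeal to \cite{Na72} is legitimate only because Namioka's construction there is genuinely independent, and it is not what you sketch. A Namioka-type Baire category argument does yield points of joint continuity for the multiplication of $E$, but this is far too weak to ``run the Haar measure construction'': there is no soft averaging procedure on a right topological group, and no such deduction exists in the literature. Namioka's actual construction goes through the Furstenberg--Ellis structure theory, building the measure by transfinite induction up a tower of equicontinuous extensions using fiberwise (relatively invariant) Haar measures --- that is, the ``heavier'' alternative route you dismiss in your last sentence is in fact the engine inside the theorem you invoke. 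Moreover, the unconditional statement you attribute to \cite{Na72} --- Haar measure on an arbitrary compact right topological group with dense topological centre --- was only established later, by Milnes and Pym (Proc.\ Amer.\ Math.\ Soc.\ 114 (1992), 387--393), and their proof is likewise nontrivial. So, judged as a citation-level argument your proposal matches the paper's own treatment of this theorem; judged as a self-contained proof, its key step is either circular or unsubstantiated as written.
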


This result was proved by Furstenberg
for metric dynamical systems using his structure theorem for minimal distal metric
$G$-systems (where $G$ is a group).
The latter was extended to general compact $G$-systems by Ellis \cite{Ellis-P},
and consequently Theorem \ref{Furst} is valid for
nonmetrizable $G$-systems as well.
Now from Ellis' theory it follows that the enveloping semigroup
of a distal semigroup action is actually a group and this fact makes it
possible
to
extend Furstenberg's theorem to distal semigroup actions.
See e.g. Namioka's work \cite{Na72},
where a proof of Theorem \ref{Furst} is obtained
as a fixed point theorem.

In Section \ref{Sec-HNS} we discuss the role of hereditarily nonsensitive
dynamical systems and the existence of invariant probability measures.
As was shown in \cite{GM1}, a metric compact $G$-system is
hereditarily nonsensitive (HNS) iff it can be linearly represented on
a separable Asplund Banach space $V$.
It follows that  the algebra $Asp(G)$, of functions on a topological group $G$
which come from
HNS (jointly continuous
\footnote{In this context the topology on $G$ becomes relevant}) $G$-systems,
coincides with the collection of functions
which appear as matrix coefficients of continuous co-representations of
$G$ on Asplund Banach spaces.
Replacing Asplund by reflexive, gives the characterization (see \cite{Me-nz}) of the algebra $WAP(G)$
of weakly almost periodic functions.
Since every reflexive space is Asplund
we have $WAP(G) \subset Asp(G)$.
Refer to  \cite{Me-nz, GM1, GMU, GM-rose} and the review
article \cite{Gl-env} for more details about HNS, $Asp(G)$ and representations
of dynamical systems on Asplund and other Banach spaces.

From the theory of HNS dynamical systems, as developed in \cite{GM1}, we deduce the
existence of a left invariant mean on $Asp(G)$ (Proposition \ref{Asp-amen}).
We note however that, in contrast to the uniqueness of the invariant
mean on $WAP(G)$, there are, in general, many different invariant means
on $Asp(G)$.

In Section \ref{Sec-tame} we observe that the
still larger
algebra $Tame(G)$, of tame functions on $G$, is not, in general, amenable.
Equivalently, tame dynamical systems need not admit an invariant probability measure.
This is a bit surprising as the class of tame dynamical systems, although it contains
many sensitive dynamical systems, can still be considered as
non-chaotic in the sense that its members lie on the ``tame" side of the
Bourgain-Fremlin-Talagrand dichotomy (see \cite{GM1,Gl-tame,Gl-env, GM-rose}).

We are grateful to I. Namioka for sending us his manuscript \cite{Na10}.

\section{A generalization of Ryll-Nardzewski's fixed point theorem}\label{Sec-fp}

\subsection{Sensitivity and fragmentability}
\label{sens}

Let $(X,\tau)$ be a topological space and $(Y,\xi)$ a uniform space.
We say that $X$ is {\em $(\tau, \xi)$-fragmented\/} by a
(typically not continuous)
function $\a: X \to Y$ if for every
nonempty subset $A$ of $X$ and every $\eps \in \xi$, there exists
an open subset $O$ of $X$ such that $O \cap A$ is nonempty and
$\a(O \cap A)$ is $\eps$-small in $Y$. Note that it is
enough to check the condition above for closed subsets $A
\subset X$.

This definition of fragmentability is a slight generalization of the original one which is due to Jayne and Rogers \cite{JR}.
It appears implicitly in a work of Namioka and Phelps \cite{NP} which deals
with a characterization of Asplund Banach spaces $V$ in terms of (weak$^*$,norm)-fragmentability (Lemma \ref{fr-cases}.1),
whence the name
\emph{Namioka-Phelps spaces}
in the locally convex version of Asplund spaces given in Definition \ref{NP} below.
See \cite{Na-RN, me-fr, Me-nz, GM1, GM-rose} for more details.

Let again $\a: X \to Y$ be a (typically not continuous) map of a
topological
space $(X,\tau)$ into a uniform space $(Y,\xi)$.
We say that $X$ is \emph{$(\tau,\xi)$-nonsensitive} (with respect to $\a$),
or simply \emph{$\xi$-nonsensitive}, when $\tau$ is understood,
if for every $\eps \in \xi$ there exists a non-void open subset $O$ in $X$
such that $\a(O)$ is $\eps$-small.
Thus $X$ is $(\tau,\xi)$-fragmented iff every non-void (closed) subspace $A$ of $X$ is $\xi$-nonsensitive with respect to the
restricted map $\a_A: A \to X$.


Now let $X$ be a compact $S$-system endowed with its unique compatible
uniform structure $\mu$.
The $S$-system $(X,\mu)$ is {\em nonsensitive\/}, NS
for short, if for every $\ep \in \mu$ there exists an
open nonempty subset $O$ of $X$ such that $sO$ is $\eps$-small in
$(X, \mu)$ for all $s \in S$.
We say that an $S$-system $X$ is
\emph{hereditarily nonsensitive} (HNS) if every closed
$S$-subsystem
of $X$ is nonsensitive. Note that for a minimal $S$-system
nonsensitivity is the same as hereditary nonsensitivity.

If we let $\mu_S$ be the uniform structure on $X$ generated by the
entourages of the form $\epsilon_S =
\{(x,x') \in X \times X: (sx,sx') \in \epsilon,\ \forall s \in S\}$
for $\epsilon \in \mu$,
then hereditary nonsensitivity is equivalent to the requirement that
the identity map $\id : (X,\mu) \to (X,\mu_S)$ be fragmented.
For more details about (non)sensitivity of dynamical systems refer e.g. to \cite{AAB96, GW1, GM1}.


\vspace{0.4cm}

As was shown by
Namioka \cite{Na-RN}, every weakly compact subset $(X,\tau)$
in a Banach space $V$ is $(\tau, norm)$-fragmented
(with respect to the map $id: (X,\tau) \to (X,norm)$).
We need the following generalization.

\begin{lem} \label{l:fr}
\cite[Prop. 3.5]{me-fr}
Every weakly compact subset $(X,\tau)$ in a locally convex space $V$ is
$(\tau, \xi)$-fragmented,
where $\xi$ is the natural uniform structure of $V$.
\end{lem}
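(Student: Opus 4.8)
The plan is to reduce the statement to Namioka's theorem for Banach spaces, quoted just above, by factoring the problem through the Banach space attached to a single continuous seminorm. First I would observe that it is enough to treat entourages coming from one continuous seminorm. Indeed, the natural uniformity $\xi$ of $V$ has a base consisting of the entourages $\eps_{p,\delta}=\{(x,y):p(x-y)<\delta\}$, where $p$ ranges over the continuous seminorms on $V$ and $\delta>0$: given the finitely many generating seminorms $p_1,\dots,p_n$ defining a basic entourage, one simply passes to $p=\max_i p_i$, which is again a continuous seminorm. Writing $d_p(x,y)=p(x-y)$, a set is $\eps_{p,\delta}$-small exactly when its $d_p$-diameter is $<\delta$, so it suffices to prove, for each continuous seminorm $p$, that $X$ is $(\tau,d_p)$-fragmented; that is, that every nonempty closed $A\subset X$ admits a $\tau$-open $O$ with $O\cap A\neq\emptyset$ and $d_p$-$\diam(O\cap A)<\delta$.

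Fixing such a $p$, I would set $N=\{z\in V:p(z)=0\}$, a closed linear subspace, and note that $\|z+N\|=p(z)$ is well defined on $V/N$ (since $p$ is constant on cosets of $N$) and defines a norm. Let $E$ be the completion of $(V/N,\|\cdot\|)$ and $q\colon V\to E$ the canonical continuous linear map, so that $\|q(x)-q(y)\|_E=p(x-y)=d_p(x,y)$. Being linear and continuous, $q$ is continuous from the weak topology of $V$ to the weak topology of $E$; hence $q|_X\colon (X,\tau)\to (E,\mathrm{weak})$ is continuous, and its image $q(X)$ is a weakly compact subset of the Banach space $E$.

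By Namioka's theorem \cite{Na-RN} the weakly compact set $q(X)$ is $(\mathrm{weak},\mathrm{norm})$-fragmented, and it remains to transport this back along $q$. Given a nonempty closed $A\subset X$ and $\delta>0$, I would apply the fragmentation of $q(X)$ to the nonempty subset $q(A)$: there is a weakly open $U\subset E$ with $U\cap q(A)\neq\emptyset$ and $\mathrm{norm}$-$\diam(U\cap q(A))<\delta$. Then $O=q^{-1}(U)$ is $\tau$-open; it meets $A$ (choose $a\in A$ with $q(a)\in U$); and for $x,y\in O\cap A$ one has $q(x),q(y)\in U\cap q(A)$, whence $d_p(x,y)=\|q(x)-q(y)\|_E<\delta$. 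Thus $d_p$-$\diam(O\cap A)\le\delta$, which is precisely $(\tau,d_p)$-fragmentation, and combining over all $p$ gives $(\tau,\xi)$-fragmentation.

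The only genuinely nontrivial ingredient is Namioka's Banach-space result, which we are entitled to quote; the rest is bookkeeping. The point that needs the most care is the pullback step: one must check that a weakly open set meeting $q(A)$ pulls back under $q$ to a $\tau$-open set still meeting $A$ (immediate, since $q(a)\in U$ forces $a\in q^{-1}(U)$), and that the norm-diameter bound transfers to a $d_p$-diameter bound, which holds because $q$ carries differences to $E$ while preserving these ``distances'' exactly. I expect no real obstacle beyond assembling these reductions correctly.
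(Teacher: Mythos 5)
Your proof is correct, but it takes a genuinely different route from the paper's. The paper proves the lemma directly: it describes the uniformity $\xi$ via uniform convergence on weak$^*$ compact equicontinuous subsets $K\subset V^*$ (Alaoglu--Bourbaki), notes that the evaluation map $\pi\colon A\times K\to\R$ is separately continuous for any closed $A\subset X$, and invokes Namioka's \emph{joint continuity theorem} \cite{Na-jct} to produce a point $a_0\in A$ of joint continuity, from which compactness of $K$ yields the required small relatively open set. You instead treat the Banach-space fragmentability theorem of \cite{Na-RN} as a black box and reduce to it: pass to the base of entourages given by continuous seminorms $p$, quotient by $N=p^{-1}(0)$, complete to a Banach space $E$, and pull the fragmentation of the weakly compact set $q(X)\subset E$ back along the weak-weak continuous map $q$. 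All your steps check out --- in particular the pullback step is sound, since $x\in O\cap A$ forces $q(x)\in U\cap q(A)$, and fragmentability is quantified over arbitrary nonempty subsets, so applying it to $q(A)$ (which need not be closed) is legitimate; note also that your seminorm base and the paper's $[K,\eps]$ base coincide, as $K$ equicontinuous corresponds to the continuous seminorm $p_K(v)=\sup_{f\in K}|f(v)|$. What each approach buys: yours shows the locally convex statement is a purely formal consequence of the Banach case, at the cost of quoting that case whole; the paper's argument is self-contained modulo the joint continuity theorem (indeed it essentially re-derives the Banach case along the way) and makes explicit the equicontinuous-subsets description of $\xi$, which is not incidental --- the same picture underlies the definition of Namioka--Phelps spaces (Definition \ref{NP}) later in the paper. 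Since Namioka's Banach theorem is itself proved via the joint continuity theorem, one can fairly say your proof uses the same key ingredient at one remove.
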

\begin{proof}
For completeness we give a sketch of the proof.
The topology of a locally convex space $V$
coincides (see \cite[Ch. IV, 1.5, Cor. 4]{Sch}) with the topology of uniform
convergence on equicontinuous subsets of $V^*$.
By the Alaouglu-Bourbaki theorem every equicontinuous subset of $V^*$
is weak$^*$
precompact, where by the \emph{weak$^*$ topology} we mean the
usual $\sigma(V^*,V)$ topology on the dual $V^*$. Therefore, the collection
of subsets
$$
[K,\eps]=\{(v_1,v_2) \in V \times V| \ \ |f(v_1)-f(v_2)|
< \eps \ \ \forall f \in K\},
$$
where $K$ is a weak$^*$ compact
equicontinuous
subset in $V^*$ and $\eps >0$, forms a base for the uniform structure $\xi$
on $V$. In order to show that $X$ is $(\tau,\xi)$-fragmented we
have to check that for every closed nonempty subset $A$ of $X$ and
every $[K,\eps]$, there exists a $\tau$-open subset $O$
of $X$ such that $O \cap A$ is nonempty and $[K,\eps]$-small.
Since $(A, \tau)$ is weakly compact in $V$, the evaluation map
$\pi: A \times K \to \R$ is separately continuous. By Namioka's
joint continuity theorem,
\cite{Na-jct} Theorem 1.2,
there exists a point $a_0$ of $A$ such
that $\pi$ is jointly continuous at every point $(a_0,y)$, where $y \in
K$. Since $K$ is compact one may choose a $\tau$-open subset $O$
of $X$ containing $a_0$ such that $|f(v_1)-f(v_2)| < \eps$ for
every $f \in K$ and $v_1, v_2 \in O \cap A$.
\end{proof}

The following lifting lemma strengthens a result of Hansel and
Troallic \cite{Ha-Tr} which in turn was inspired by a technique developed by
Veech \cite{Veech}.

\begin{lem} \label{lifting}
Let $X$ be a compact minimal $S$-system with its unique compatible
uniform structure $\mu$.
Assume that $X$ is $\xi$-nonsensitive (e.g., $\xi$-fragmented) with
respect to an $S$-map $\a: X \to M$ into a uniform
space $(M,\xi)$, where the semigroup action of $S$ on $M$ is $\xi$-distal.
Then every pair $(x,y)$ in $X$ with distinct images $\a(x) \neq \a(y)$
is $\mu$-distal.
In particular, if $\a$ is injective then the $S$-action on $(X,\mu)$ is distal.
\end{lem}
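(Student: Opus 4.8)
The plan is to argue by contradiction. Suppose $(x,y)$ is a pair with $\a(x) \neq \a(y)$ which fails to be $\mu$-distal; equivalently, $(x,y)$ is $\mu$-\emph{proximal}, meaning that for every $\de \in \mu$ there is some $s \in S$ with $(sx,sy) \in \de$. I would first record the two tools the hypotheses supply. Since $\a(x) \neq \a(y)$ and $S$ acts $\xi$-distally on $M$, there is an entourage $\eps \in \xi$ with $(s\a(x), s\a(y)) \notin \eps$ for all $s \in S$; because $\a$ is an $S$-map, $s\a(x)=\a(sx)$, so this reads $(\a(sx),\a(sy)) \notin \eps$ for every $s \in S$. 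Second, applying $\xi$-nonsensitivity of $\a$ to this same $\eps$ produces a nonempty open set $O \subseteq X$ such that $\a(O)$ is $\eps$-small.

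Next I would extract a limit from the proximality hypothesis. Letting $\de$ run through a base of $\mu$ and choosing $s_\de \in S$ with $(s_\de x, s_\de y) \in \de$, compactness of $X \times X$ yields a subnet $(s_i x, s_i y)$ that converges; since $\bigcap_{\de} \overline{\de}$ is the diagonal in the compact Hausdorff space $X$, its limit lies on the diagonal, say $s_i x \to z$ and $s_i y \to z$. Now minimality enters: the orbit $Sz$ is dense, so there is $t \in S$ with $tz \in O$, whence $z \in t^{-1}O$, an open set since $\la_t$ is continuous. As $s_i x \to z$ and $s_i y \to z$, eventually both $t s_i x \in O$ and $t s_i y \in O$, so $\a(t s_i x), \a(t s_i y) \in \a(O)$, an $\eps$-small set, giving $(\a(t s_i x), \a(t s_i y)) \in \eps$. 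But $t s_i \in S$, which contradicts the consequence of $\xi$-distality recorded above. This contradiction would show that every such pair is $\mu$-distal; the final clause is then immediate, since injectivity of $\a$ forces $\a(x) \neq \a(y)$ whenever $x \neq y$, so that the whole $S$-action on $(X,\mu)$ is distal.

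The conceptual crux, and the point I would take care to present clearly, is the interplay between the (a priori dynamics-free) nonsensitivity of $\a$ and the dynamics. Nonsensitivity supplies only a \emph{single} good open set $O$ where $\a$ is $\eps$-small; it is minimality together with the $S$-equivariance of $\a$ that lets me transport $O$ by a translation $t$ into a neighborhood of the proximal limit $z$, thereby converting the static smallness of $\a(O)$ into an assertion about the concrete semigroup elements $t s_i$. Note in particular that only left multiplication by $t$ is used, so the argument never needs inverses and works for a genuine semigroup $S$; this is also why fragmentability, being stronger than nonsensitivity, is more than enough and the weaker hypothesis suffices.

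The one step demanding genuine care, rather than bookkeeping, is the passage from ``$(x,y)$ is not $\mu$-distal'' to an honest convergent net with a common limit $z$ on the diagonal. This is where compactness of $X\times X$ and the characterization of the diagonal as the intersection of the closed entourages of the unique compatible uniformity $\mu$ are used; everything after that is a matter of combining continuity of $\la_t$, openness of $O$, and the $\eps$-smallness of $\a(O)$ against the distality entourage.
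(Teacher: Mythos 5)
Your proof is correct. The negation of $\mu$-distality is stated properly; the net extracted from proximality does cluster on the diagonal (the subnet is eventually inside each closed entourage of a base, and $\bigcap_{\de}\overline{\de}=\Delta$ for the unique uniformity of a compact Hausdorff space); and you only ever multiply on the left by a single $t\in S$, so the argument is genuinely semigroup-valid, as the lemma requires. The ingredients are exactly those of the paper's proof---the distality entourage $\eps$, the nonsensitive open set $O$, minimality, and equivariance of $\a$---but the logical packaging differs. The paper argues directly, with no contradiction, no nets, and no proximal limit: minimality gives the open cover $X=\bigcup_{s\in S}s^{-1}O$, so that $\gamma:=\bigcup_{s\in S}\left(s^{-1}O\times s^{-1}O\right)$ is an open neighborhood of the diagonal and hence an entourage of $\mu$; if $(sx,sy)\in\gamma$ for some $s$, then $tsx,tsy\in O$ for some $t$, and the $\eps$-smallness of $\a(O)$ plus equivariance contradicts the choice of $\eps$ at the element $ts$, so $\gamma$ itself witnesses $(sx,sy)\notin\gamma$ for all $s$. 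In effect the paper globalizes what you localize: where you pass to a proximal limit point $z$ and move $O$ into a neighborhood of $z$, the paper uniformizes over all of $X$ at once and produces an explicit distal entourage depending only on $\eps$, thereby avoiding compactness of $X\times X$, subnets, and the closed-entourage characterization of the diagonal. Your route costs this extra machinery but is sound; the paper's buys brevity and a constructive output---one $\gamma$ that works simultaneously for every pair whose images are separated by the same $\eps$ under all of $S$.
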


\begin{proof}
Consider a pair of points $x, y \in X$ with $\a(x) \neq \a(y)$.
Since $M$ is $\xi$-distal there exists an entourage $\eps \in \xi$ such that
$$
(s \a(x),s \a(y)) \notin \eps \ \ \forall s \in S.
$$

As $X$ is $\xi$-nonsensitive, there exists a \emph{non-void}
$\mu$-open subset $O \subset X$ such that $\a(O)$ is $\eps$-small.
By minimality of $X$
$$
X = \bigcup_{s \in S} s^{-1}O,
$$
where $s^{-1}O=\{x \in X: \ sx \in O \}.$
Set
$$
\gamma : = \bigcup_{s \in S} (s^{-1}O \times s^{-1}O) \subset X \times X.
$$
Then $\gamma \in \mu$
(every open neighborhood of the diagonal in $X \times X$ for
a compact Hausdorff space $X$ is an element of the
unique compatible uniform structure).
Since $\a$ is an $S$-map one easily gets
$$
(sx,sy) \notin \gamma \ \ \forall s \in S.
$$
\end{proof}

For later use
we list in Lemma \ref{fr-cases} some additional situations where
fragmentability appears.
First recall some necessary definitions.
A Banach space $V$ is called \emph{Asplund}
if the dual of every separable Banach subspace of $V$ is separable.
We say that a Banach space $V$ is \emph{Rosenthal} if it does not
contain an isomorphic copy of $l_1$
\cite{GM-rose}.
A uniform space $(X,\xi)$ is called
\emph{uniformly Lindel\"{o}f} \cite{me-fr} (or \emph{$\aleph_0$-precompact}
\cite{Isb})
if for every $\eps \in \mu$ there exists a countable
subset $A \subset X$ such that $A$ is $\eps$-dense in $X$.

\begin{lem} \label{fr-cases}
\ben
\item
\cite{Na-RN} A Banach space $V$ is Asplund iff every bounded subset of
the dual $V^*$ is $(weak^*,norm)$-fragmented.
\item
\cite{GM-rose} A Banach space
$V$ is Rosenthal iff every bounded subset of the dual $V^*$ is
$(weak^*,weak)$-fragmented.
\item
\cite{Na-RN} A topological space $(X,\tau)$ is {\em scattered\/} (i.e., every
nonempty subspace has an isolated point)
iff $X$ is $(\tau, \xi)$-fragmented for any uniform structure $\xi$ on the set $X$.
A compact space $X$ is scattered iff the Banach space $C(X)$ is Asplund.
\item
Let $(X, \tau)$ be a compact space and $\xi$ a uniform structure on the set $X$.
Assume that $(X, \xi)$ is uniformly Lindel\"{o}f (e.g., $\xi$-separable)
and that there exists
a base for the uniformity $\xi$ consisting of $\tau$-closed subsets of $X \times X$.
Then $X$ is $(\tau, \xi)$-fragmented.
\item
\cite[Prop. 6.7]{GM1} If $X$ is a Polish space and $\xi$ a metrizable
separable uniform structure
on $Y$
then $f: X \to Y$ is fragmented iff $f$ is a Baire 1 function.
\een
\end{lem}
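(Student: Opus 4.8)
Parts (1), (2), (3) and (5) are recorded with references to the literature, so the substance of the lemma is the new assertion (4); I would prove this and cite the rest. In (4) the map to be shown $(\tau,\xi)$-fragmented is the identity $\id:(X,\tau)\to(X,\xi)$, so the goal is: given a nonempty $\tau$-closed $A\subseteq X$ and $\eps\in\xi$, to produce a $\tau$-open $O$ with $O\cap A\neq\emptyset$ and $O\cap A$ being $\eps$-small.

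The plan is first to replace $\eps$ by a better-behaved smaller entourage. I would choose $\eta\in\xi$ with $\eta\circ\eta\subseteq\eps$, then use the hypothesis that the $\tau$-closed entourages form a base to pick a $\tau$-closed $\delta_0\subseteq\eta$, and set $\delta=\delta_0\cap\delta_0^{-1}$. Since transposition is a $\tau$-homeomorphism of $X\times X$, $\delta$ is again $\tau$-closed; it is moreover symmetric and satisfies $\delta\circ\delta\subseteq\eps$. For $a\in X$ the ball $B_\delta(a)=\{x\in X:(a,x)\in\delta\}$ is the preimage of the $\tau$-closed set $\delta$ under the continuous map $x\mapsto(a,x)$, hence $\tau$-closed. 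Now the uniformly Lindel\"{o}f hypothesis yields a countable $\delta$-dense set $\{a_n:n\in\N\}$, so that $X=\bigcup_n B_\delta(a_n)$ and therefore $A=\bigcup_n\,(A\cap B_\delta(a_n))$ is a countable union of $\tau$-closed subsets of $A$.

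The key step is a Baire category argument on $A$. As a $\tau$-closed subset of the compact Hausdorff space $X$, $A$ is itself compact Hausdorff and hence a Baire space. Consequently the countable closed cover $\{A\cap B_\delta(a_n):n\in\N\}$ cannot consist entirely of nowhere dense sets, so some $A\cap B_\delta(a_{n_0})$ has nonempty interior relative to $A$; that is, there is a $\tau$-open $O$ with $\emptyset\neq O\cap A\subseteq B_\delta(a_{n_0})$. Finally, if $x,y\in O\cap A$ then $(a_{n_0},x),(a_{n_0},y)\in\delta$, and symmetry gives $(x,a_{n_0})\in\delta$, whence $(x,y)\in\delta\circ\delta\subseteq\eps$. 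Thus $O\cap A$ is $\eps$-small, as required.

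The only real obstacle is this Baire category step, and the two hypotheses are exactly what make it available: the uniformly Lindel\"{o}f property reduces the cover of $A$ by $\delta$-balls to a countable one, while the existence of a $\tau$-closed base guarantees both that these balls are $\tau$-closed and that the resulting cover is by closed sets --- precisely the input Baire category requires. The remaining work is the routine uniform-space bookkeeping of arranging $\delta$ to be simultaneously symmetric, $\tau$-closed, and small enough that $\delta\circ\delta\subseteq\eps$.
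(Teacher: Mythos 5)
Your proof is correct and follows exactly the route the paper intends: for part (4) the paper's proof is just the one-line remark ``it is easy to check, using Baire category theorem,'' and your argument --- a symmetric $\tau$-closed entourage $\delta$ with $\delta\circ\delta\subseteq\eps$, a countable cover of $A$ by the $\tau$-closed $\delta$-balls via uniform Lindel\"{o}fness, and Baire category in the compact set $A$ --- is precisely the standard filling-in of that sketch. You also correctly treat (1), (2), (3), (5) as cited results, just as the paper does.
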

\begin{proof}
(4) It is easy to check,
using Baire category theorem, that $X$ is $(\tau,\xi)$-fragmentable.
\end{proof}

\subsection{Fixed point theorems}

An \emph{$S$-affine compactification} 
of an $S$-system $X$ is a pair $(Q,\phi)$ where
$Q$ is a compact convex affine $S$-system,
%
and $\phi: X \to Q$ is a continuous $S$-map such that $\overline{co} \phi(X) =Q$.
See \cite{GM-OC} for a detailed exposition.

If $X$ is a compact $S$-system then the natural embedding
$\delta: X \to P(X)$ into the affine compact $S$-system $P(X)$ of
probability measures on $X$, defines an $S$-affine compactification
$(P(X),\delta)$.
Moreover this  $S$-affine compactification
is \emph{universal} in the sense that
for any other  $S$-affine compactification $(Q,\phi)$ of $X$ there
exists a uniquely defined continuous affine surjective
$S$-map $\textsl{b}: P(X) \to Q$, called the
\emph{barycenter map}, such that $\textsl{b} \circ \delta=\phi$.

\begin{defin} \label{d:afpt}
A
(not necessarily compact) $S$-system $X$ has the {\em affine fixed point (a.f.p.) property} if whenever $(Q,\phi)$ is an
$S$-affine
compactification of $X$, then the dynamical system $Q$ has a fixed point.
When $X$ is compact, in view of the remark above,
this is equivalent to saying that $X$ admits an $S$-invariant probability measure.
\end{defin}


\begin{thm} \label{afpt}
Let $(X,\tau)$ be a compact $S$-system and $(M,\xi)$
a uniform space equipped with a semigroup action of $S$.
Suppose
\ben
\item
There exist a compact subsystem (minimal subsystem)
$Y \subset X$ and an injective
$S$-map $\a:~Y \to M$ such that $Y$ is $(\tau,\xi)$-fragmented
(respectively, $(\tau,\xi)$-nonsensitive).
\item
The action of $S$ on $\a(Y)$ is $\xi$-distal.
\een
Then the $S$-system $X$ has the affine fixed point property.
\end{thm}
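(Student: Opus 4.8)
The plan is to reduce the problem to the existence of an invariant probability measure on a suitable minimal subsystem, where the distal structure theory applies through Furstenberg's Theorem \ref{Furst}, and then to transport that measure up to $X$. Since $X$ is compact, by Definition \ref{d:afpt} the affine fixed point property is equivalent to the existence of an $S$-invariant probability measure on $X$; hence it suffices to produce one such measure.

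First I would pass to a minimal subsystem carrying the nonsensitivity hypothesis. In the version where $Y$ is merely a compact subsystem but $(\tau,\xi)$-fragmented, I apply Zorn's lemma to the family of nonempty closed $S$-invariant subsets of $Y$: a chain of such sets has nonempty (by compactness) closed $S$-invariant intersection, so a minimal subsystem $Y_0 \subseteq Y$ exists. Fragmentability of $Y$ says precisely that every closed subspace, and in particular $Y_0$, is $\xi$-nonsensitive with respect to the restricted map $\a|_{Y_0}$. In the version where $Y$ is already minimal and $(\tau,\xi)$-nonsensitive I simply set $Y_0 = Y$. In either case I obtain a compact minimal $S$-system $Y_0$, an injective $S$-map $\a|_{Y_0}\colon Y_0 \to M$, and $\xi$-nonsensitivity of $Y_0$ with respect to it; moreover $\a(Y_0) \subseteq \a(Y)$ is $S$-invariant and inherits $\xi$-distality from hypothesis (2), since distinct points of the subset $\a(Y_0)$ are distinct points of $\a(Y)$.

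Now all hypotheses of Lemma \ref{lifting} hold for $Y_0$, and since $\a|_{Y_0}$ is injective that lemma lifts distality from $\xi$ to the unique compatible uniformity $\mu$ of $Y_0$, so the $S$-action on $(Y_0,\mu)$ is distal. Furstenberg's Theorem \ref{Furst} then supplies an $S$-invariant probability measure $\nu$ on the distal compact system $Y_0$. Finally, the inclusion $Y_0 \hookrightarrow X$ is an $S$-map, so viewing $\nu$ as a Borel probability measure on $X$ supported on the closed set $Y_0$ keeps it $S$-invariant. This is the desired invariant measure on $X$, and the affine fixed point property follows from the compact case of Definition \ref{d:afpt}.

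The conceptual heavy lifting is already packaged in the two quoted results: Lemma \ref{lifting} performs the delicate upgrade of $\xi$-distality to $\mu$-distality from nonsensitivity plus minimality, and Theorem \ref{Furst} converts distality into an invariant measure. Thus the only genuinely new work, and the step requiring care, is the reduction: verifying that a minimal subsystem exists for a semigroup action, and that fragmentability of $Y$ really hands down $\xi$-nonsensitivity to $Y_0$ while preserving injectivity of $\a$ and $\xi$-distality of the action on the image, so that Lemma \ref{lifting} may be invoked. Everything after that amounts to citing Furstenberg and pushing a measure forward along an inclusion, which is routine.
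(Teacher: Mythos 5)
Your proof is correct and takes essentially the same route as the paper's: pass to a minimal subsystem (where fragmentability hands down $\xi$-nonsensitivity to the restriction), lift $\xi$-distality to the compact topology via Lemma \ref{lifting}, invoke Furstenberg's Theorem \ref{Furst}, and transport the resulting invariant measure. The only cosmetic difference is that you push the measure up to $X$ and cite the compact case of Definition \ref{d:afpt}, whereas the paper works inside a given affine compactification $Q$ with $Q_0=\overline{co}(Y)$ and the barycenter map $b\colon P(Y)\to Q_0$ --- both steps rest on the same universality property of the space of probability measures.
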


\begin{proof}
Let $(Q,\phi)$ be an $S$-affine compactification of $X$.
Let $Y \subset X$ be a $\tau$-compact subsystem which satisfies
the conditions (1) and (2).
Since the $s$-translations $\lambda_s: Q \to Q$ are
continuous,
the closed convex hull
$Q_0=\overline{co} (Y)$
is $S$-invariant.

Fragmentability is a hereditary property, hence in any case we may assume
that $Y$ is minimal and $(\tau,\xi)$-nonsensitive.
Applying Lemma \ref{lifting} to the map
$\alpha: (Y, \tau) \to (\a(Y), \xi)$,
we see that the $S$-system $Y$ is $\tau$-distal.
By Furstenberg's theorem \ref{Furst}
the distal dynamical system $(S,Y,\tau)$ admits an
invariant probability measure.
Therefore, the compact $S$-system $P(Y)$ has a fixed point.
Since $Q_0$ is an $S$-factor of $P(Y)$
via the barycenter map
$b: P(Y) \to Q_0$,
we conclude that $Q_0$,
and hence also $Q$, admit a fixed point.
\end{proof}

Lemma \ref{l:fr} shows that the following result
is indeed a generalization of Ryll-Nardzewski's
fixed point theorem.

\begin{thm} \label{general-fpt}
Let $\tau_1$ and $\tau_2$ be two locally convex topologies on a vector
space $V$ with their uniform structures $\xi_1$ and $\xi_2$ respectively.
Assume that $S \times Q \to Q$ is a semigroup action such that
$Q$ is an affine $\tau_1$-compact $S$-system.
Let $X$ be an  $S$-invariant $\tau_1$-closed subset of $Q$ such that:
\ben
\item
$X$ is either $(\tau_1,\xi_2)$-fragmented, or $X$ is minimal and
$(\tau_1,\xi_2)$-sensitive.
\item
the $S$-action is $\xi_2$-distal on $X$.
\een
Then $Q$ contains a fixed point.
\end{thm}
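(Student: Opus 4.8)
The plan is to deduce Theorem \ref{general-fpt} directly from the abstract fixed point theorem \ref{afpt}, by reading the two topologies on $V$ as the source and target of one and the same fragmented map. Concretely, I would take the auxiliary uniform space of Theorem \ref{afpt} to be $M=(X,\xi_2)$, i.e. the set $X$ equipped with the restriction of the uniform structure $\xi_2$ and carrying the $S$-action it inherits from $Q$, and I would let $\alpha=\id\colon X\to M$ be the identity. This $\alpha$ is trivially an injective $S$-map (it commutes with the action since $\alpha(sx)=sx=s\alpha(x)$), and, crucially, the $(\tau_1,\xi_2)$-fragmentation (respectively $(\tau_1,\xi_2)$-nonsensitivity) of $X$ assumed in hypothesis (1) is literally the statement that $\alpha$ is $(\tau_1,\xi_2)$-fragmented (respectively nonsensitive). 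So almost no work remains beyond checking that the hypotheses of Theorem \ref{afpt} hold with $Y=X$.

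For that verification I would note first that $(X,\tau_1)$ is a $\tau_1$-compact $S$-system, being a $\tau_1$-closed $S$-invariant subset of the $\tau_1$-compact space $Q$. In the first alternative of hypothesis (1), $X$ itself is the required compact subsystem $Y$ and $\alpha$ witnesses $(\tau_1,\xi_2)$-fragmentation; in the second alternative $X$ is already minimal and $(\tau_1,\xi_2)$-nonsensitive, so again $Y=X$ serves. In either case condition (2) of Theorem \ref{afpt} is immediate, since $\alpha(Y)=X$ and the $S$-action on $X$ is $\xi_2$-distal by hypothesis (2). Theorem \ref{afpt} then yields that the compact $S$-system $X$ has the affine fixed point property. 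Internally this is precisely Lemma \ref{lifting} lifting $\xi_2$-distality to $\tau_1$-distality on a minimal nonsensitive $Y$, followed by Furstenberg's Theorem \ref{Furst} to produce an invariant measure, so one could equally run that argument by hand.

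Finally I would convert the affine fixed point property of $X$ into a genuine fixed point of $Q$. Set $Q_0=\overline{co}^{\,\tau_1}(X)$, the $\tau_1$-closed convex hull of $X$ inside $Q$. Since the translations $\lambda_s$ are $\tau_1$-continuous affine maps and $X$ is $S$-invariant, $Q_0$ is a compact convex $S$-invariant affine subsystem of $Q$, and the inclusion $X\hookrightarrow Q_0$ is an $S$-affine compactification of $X$ (indeed $\overline{co}^{\,\tau_1}(X)=Q_0$ by construction). The affine fixed point property therefore forces $Q_0$ to contain an $S$-fixed point, which is a fortiori a fixed point of $Q$.

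The routine ingredients — compactness of $X$, the $S$-map property and injectivity of the identity, and the inheritance of $\xi_2$-distality to $X$ — are all immediate; the only genuinely organizational point, which I regard as the crux, is recognizing that comparing $\tau_1$ and $\tau_2$ on the common space $V$ is captured exactly by the identity $\id\colon(X,\tau_1)\to(X,\xi_2)$, so that Theorem \ref{afpt} applies with no passage to any auxiliary Banach or function space. The one hypothesis I would double-check against Theorem \ref{afpt} is that the intended reading of (1) in the minimal case is \emph{nonsensitive}; with that reading the deduction is complete.
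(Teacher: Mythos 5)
Your proposal is correct and takes essentially the same route as the paper: the authors likewise apply Theorem \ref{afpt} to the identity map $\id\colon (X,\tau_1)\to (X,\xi_2)$ (taking $Y=X$, or a minimal subsystem in the nonsensitive case) and then obtain the fixed point of $Q$ via the compact affine subsystem $Q_0=\overline{co}\, X$. Your closing caveat is also well taken: the word ``sensitive'' in hypothesis (1) is indeed a typo for ``nonsensitive,'' as the paper's own proof confirms.
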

\begin{proof}
Applying Theorem \ref{afpt} to the map $id: (X,\tau_1) \to (Q, \xi_2)$
it follows that $X$ has the a.f.p. property.
Hence the compact affine $S$-system $Q_0:=\overline{co} X$ has a fixed point,
which is also a fixed point of $Q$.
\end{proof}

\begin{cor} \label{old-res}
Theorem \ref{general-fpt}
includes in particular the following results:
\ben
\item
Ryll-Nardzewski's theorem \ref{Ryll-Na}.
\item
Furstenberg's theorem \ref{Furst} and its generalized version of
Namioka \cite[Theorem 4.1]{Na72}.
\item
Veech's theorem concerning weakly compact subsets in Banach spaces \cite[Cor. 2.5]{Veech}
and its locally convex version of Namioka
\emph{(see \cite[p. 361]{Veech} and \cite[Thm 5.1]{Na10})}.
\item
Namioka-Phelps' theorem \cite[p. 745]{NP} about weak-star compact convex subsets
in the dual $V^*$ of an Asplund Banach space $V$ \emph{(see also Proposition \ref{NP}
and Remark \ref{NP-extent} below)}.
\item
Assume in the hypotheses of Theorem \ref{general-fpt} that condition
(1) is replaced by
\sk
\bit
\item
[($\star$)] \ \   $X \subset V$ is
$\xi$-separable (or, more generally, uniformly Lindel\"{o}f)
and there exists
a base for the uniformity $\xi$ consisting of $\tau$-closed subsets of $X \times X$.
\eit
\sk
Then $Q$ contains a fixed point.
\een
\end{cor}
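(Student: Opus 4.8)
The plan is to deduce each of the five items as a special case of Theorem \ref{general-fpt}, so the bulk of the work is recognizing, in each situation, a pair of locally convex topologies $\tau_1, \tau_2$ and an $S$-invariant closed subset $X$ satisfying hypotheses (1) and (2). The unifying observation is that Theorem \ref{general-fpt} is applied with $\alpha = \id$, so condition (2) always amounts to $\xi_2$-distality of the $S$-action and condition (1) is supplied either by Lemma \ref{l:fr} (weak compactness yields fragmentability) or by Lemma \ref{fr-cases} (Asplund/scattered/uniformly Lindel\"of situations yield fragmentability).

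First, for item (1) I would take $\tau_1$ to be the weak topology $\sigma(V,V^*)$ and $\tau_2$ the original locally convex topology of $V$; then a weakly compact $Q$ is $\tau_1$-compact, and Lemma \ref{l:fr} gives that $X=Q$ is $(\tau_1,\xi_2)$-fragmented, while $\xi$-distality is exactly hypothesis (2) of Ryll-Nardzewski. For item (3), Veech's and Namioka's locally convex results are of the same shape---a weakly compact convex set on which $S$ acts $\xi$-distally---so they follow verbatim from the same choice of topologies once one notes that the distality hypothesis there matches $\xi_2$-distality. For item (2), Furstenberg's theorem and Namioka's generalization are recovered by taking $X$ to be the given distal compact system itself: distality in the compact topology means the two topologies coincide, so $\tau_1 = \tau_2$ and fragmentability holds trivially (the identity map into the same uniformity is fragmented), while (2) is the hypothesis; here $Q = P(X)$ and the conclusion is the invariant measure via Definition \ref{d:afpt}.

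For item (4), the Namioka--Phelps setting is a weak$^*$-compact convex subset $Q$ of the dual $V^*$ of an Asplund space $V$. Here I would take $\tau_1$ to be the weak$^*$ topology and $\tau_2$ the norm topology with its uniform structure $\xi_2$. By Lemma \ref{fr-cases}.1, since $V$ is Asplund, every bounded subset of $V^*$ is $(\text{weak}^*,\text{norm})$-fragmented, so $X=Q$ satisfies (1); condition (2) is the norm-distality hypothesis of Namioka--Phelps. For item (5), the substitute hypothesis $(\star)$ is precisely the hypothesis of Lemma \ref{fr-cases}.4, which concludes that $X$ is $(\tau_1,\xi)$-fragmented; hence condition (1) of Theorem \ref{general-fpt} is met and the conclusion follows directly.

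The main obstacle, and the only place requiring genuine care rather than bookkeeping, is item (2): one must check that the reduction really does reproduce Furstenberg's and Namioka's statements, in particular that ``$X$ distal compact'' corresponds to the degenerate case $\tau_1=\tau_2$ where fragmentability is vacuous, and that the affine fixed point property delivered by Theorem \ref{afpt} translates, via the universality of $(P(X),\delta)$ recorded in Definition \ref{d:afpt}, into the existence of an invariant probability measure. The remaining items are essentially a matter of matching the ambient topologies and citing the appropriate fragmentability lemma, so I would present them compactly, emphasizing the choice of $(\tau_1,\tau_2,X)$ in each case and leaving the verification of the hypotheses to the cited lemmas.
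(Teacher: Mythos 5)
Your proposal is correct and takes essentially the same route as the paper's proof: in each item you select the pair $(\tau_1,\tau_2)$ and the invariant set $X$ and invoke the same lemmas the paper does --- Lemma \ref{l:fr} for items (1) and (3), the degenerate choice $\tau_1=\tau_2=w^*$ on $C(X)^*$ with $Q=P(X)$ for item (2) (where, as you note, fragmentability of the identity into the compatible uniformity is automatic), Lemma \ref{fr-cases}.1 for item (4), and Lemma \ref{fr-cases}.4 for item (5). One small correction of wording: in item (3) Veech's distal set is weakly compact but \emph{not} assumed convex, so $X$ is in general a proper subset of $Q=\overline{co}\,X$ --- precisely the point the paper flags by saying ``$X$ is not necessarily all of $Q$'' --- but since Theorem \ref{general-fpt} permits $X\subsetneq Q$, your deduction goes through unchanged.
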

\begin{proof}
(1) Apply Theorem \ref{general-fpt} (with $X=Q$) and
Lemma \ref{l:fr}.

(2) Let $V$ be the locally convex space $(C(X)^*, w^*)$,
with its weak-star topology. Let $\xi$ be the corresponding
uniform structure and let $Q=P(X)$. Thus, in this case
$\tau_1=\tau_2=w^*$ and $\xi_1=\xi_2=\xi$
coincide on $X \subset C(X)^*$.
Hence, in particular, $X$ is $(\tau_1,\xi_2)$-fragmented and
$S$ is $\xi_2$-distal on $X$.
(Of course this is not a new proof of Furstenberg's theorem, as our
proof of Theorem \ref{general-fpt} relies on it.
This is merely the claim that conversely, Furstenberg's theorem also
follows from Theorem \ref{general-fpt}.)

(3) We need, as in (1), to apply Lemma \ref{l:fr} (but now
$X$ is not necessarily all of $Q$).

(4) Recall that by Lemma \ref{fr-cases}.1  weak$^*$ compact subsets
in the dual of an Asplund space $V$ are (weak$^*$, norm)-fragmented.

(5) Apply Lemma \ref{fr-cases}.4 and Theorem \ref{general-fpt}.
\end{proof}

\begin{remark} \label{rem}
(1)\
In cases where
the distality can be extended to (or is assumed on) all of
$Q$ the existence of a fixed point can be achieved without the
use of Furstenberg's theorem \ref{Furst}, either by Hahn's fixed point
theorem or via Glasner's results using the concept of
\emph{strong proximality}
\cite{Gl-comp, Gl-book} (see also Example \ref{ex} below).

(2) Namioka and Phelps noticed \cite[p. 745]{NP}
that Ryll-Nardzewski's theorem is not generally true in dual
spaces $V^*$
when the weak topology is replaced by the weak$^*$ topology.
Thus the assumption that $V$ is Asplund in Corollary \ref{old-res}.4
is essential.

(3) \
Case (5) of Corollary \ref{old-res}
strengthens a result of Namioka \cite[Theorem 3.7]{Na-Isr}
and covers the results of Hansel-Troallic \cite{Ha-Tr}.
The latter, and also \cite[p.174]{Gra-Dug}, use the standard reduction to
the case where $S$ is
countable and $V$ is (weakly) separable.
\end{remark}

\subsection{The dual system fixed point property and Namioka-Phelps spaces}

As mentioned in Lemma \ref{fr-cases}.1, a Banach space $V$ is Asplund
iff every bounded subset of its dual is (weak$^*$, norm)-fragmented.
This fact together with Theorem \ref{general-fpt} and Remark \ref{rem}.2
suggest Definition \ref{d-fpp} below.
First, a few words of explanation.
For a locally convex space $V$,
the standard uniform structure $\xi^*$ of the dual $V^*$ is the uniform structure of
uniform convergence on the family of all bounded subsets of $V$.
By the Alaoglu-Bourbaki theorem every equicontinuous subset $Q$ of
$V^*$ is relatively weak$^*$ compact. Conversely,
if $V$ is a barreled space (or, if $V$ is Baire as a
topological space) then it follows from the generalized
Banach-Steinhaus theorem (see \cite[Ch. III, \S 4.2]{Sch})
that every weak$^*$ compact subset of $V^*$ is equicontinuous.
Clearly, if $V$ is a normed space then the equicontinuous subsets of the dual $V^*$ are exactly the norm bounded subsets.

\begin{defin} \label{d-fpp}
(a)\
We say that
a Banach space $V$ has the \emph{dual system fixed point property}
if for
every semigroup $S$, every convex weak$^*$ compact
norm-distal affine $S$-system $Q \subset V^*$ has a fixed point.

(b)\
More generally,
a locally convex space $V$ has the \emph{dual system fixed point property}
if whenever $Q \subset V^*$ is a weak$^*$ compact convex affine
$S$-system such that
(1) $Q$ as a subset of $V^*$
is equicontinuous and (2)
$S$ is $\xi^*$-distal on $Q$,
then $Q$ has a fixed point.
(Note that if $V$ is
barrelled then we may drop the assumption
(1)).
\end{defin}

Definition \ref{d-fpp}
and Theorem \ref{general-fpt} lead to the study of locally convex vector spaces $V$
such that every ($w^*$-compact) equicontinuous subset $K$ in $V^*$ is
(weak$^*$,$\xi^*$)-fragmented. This is a locally convex version of Asplund
Banach spaces.
In fact, this definition was already introduced in \cite{me-fr}, where
it was motivated by problems concerning continuity of dual actions.
A typical result of \cite{me-fr} asserts that if $V$ is
an Asplund Banach space then for every
continuous linear action of a topological group $G$ on $V$ the
corresponding dual action of $G$ on $V^*$ is continuous.

\begin{defin} \label{NP} \cite{me-fr}
A  locally convex space $V$ is called a \emph{Namioka-Phelps space},
(NP)-space for short, if every equicontinuous
subset $K$ in $V^*$ is (weak$^*$,$\xi^*$)-fragmented.
\end{defin}

Now by Theorem \ref{general-fpt} we get:

\begin{cor} \label{c:NP}
Every (NP) locally convex space has the dual system fixed point property.
\end{cor}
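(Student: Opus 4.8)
The plan is to deduce the statement directly from Theorem \ref{general-fpt}, the whole content being to match the two definitions involved against the two hypotheses of that theorem. Let $V$ be an (NP) space and let $Q \subset V^*$ be a weak$^*$ compact convex affine $S$-system satisfying conditions (1) and (2) of Definition \ref{d-fpp}(b); that is, $Q$ is equicontinuous and $S$ is $\xi^*$-distal on $Q$. I would apply Theorem \ref{general-fpt} with the ambient vector space taken to be $V^*$ (rather than $V$).

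First I would fix the two locally convex topologies required by Theorem \ref{general-fpt}: let $\tau_1=\sigma(V^*,V)$ be the weak$^*$ topology, with uniform structure $\xi_1$, and let $\tau_2$ be the strong topology $\beta(V^*,V)$ of uniform convergence on the bounded subsets of $V$, whose uniform structure is precisely the standard uniform structure $\xi^*$ of the dual; set $\xi_2:=\xi^*$. Both $\tau_1$ and $\tau_2$ are locally convex topologies on $V^*$, so the preamble hypotheses of Theorem \ref{general-fpt} are satisfied. I then take $X:=Q$, which is trivially an $S$-invariant, $\tau_1$-closed subset of the affine $\tau_1$-compact $S$-system $Q$, so that no passage to a closed convex hull is needed.

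It remains to verify the two numbered conditions. Condition (2) is immediate: $\xi_2$-distality of $S$ on $X$ is literally the hypothesis that $S$ is $\xi^*$-distal on $Q$. For condition (1), the key observation is that the defining property of an (NP) space supplies exactly the fragmentability alternative I need: since $Q$ is equicontinuous in $V^*$ and $V$ is an (NP) space, Definition \ref{NP} gives that $Q$ is $(\text{weak}^*,\xi^*)$-fragmented, i.e. $(\tau_1,\xi_2)$-fragmented. Thus the first alternative of condition (1) holds, and Theorem \ref{general-fpt} produces a fixed point in $Q$.

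There is no genuine obstacle here beyond bookkeeping; the corollary is essentially a restatement of Theorem \ref{general-fpt} once one recognizes that the (NP) hypothesis was engineered to coincide with its fragmentability alternative, with $\xi^*$ playing the role of $\xi_2$. The single point worth checking with care is that $\beta(V^*,V)$ is locally convex, so that Theorem \ref{general-fpt} applies verbatim; this is standard, and (as noted in the discussion preceding Definition \ref{d-fpp}) in the barrelled case the equicontinuity assumption (1) of Definition \ref{d-fpp}(b) is automatic, so the hypothesis there is no stronger than weak$^*$ compactness.
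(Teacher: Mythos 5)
Your proposal is correct and follows exactly the paper's route: the authors derive Corollary \ref{c:NP} immediately from Theorem \ref{general-fpt}, and your instantiation ($\tau_1=\sigma(V^*,V)$, $\xi_2=\xi^*$ as the uniformity of $\beta(V^*,V)$, $X=Q$, with the (NP) property supplying $(\tau_1,\xi_2)$-fragmentability and hypothesis (2) of Definition \ref{d-fpp} supplying $\xi_2$-distality) is precisely the bookkeeping the paper leaves implicit. No gaps; your side remarks on local convexity of the strong topology and on the barrelled case match the paper's discussion preceding Definition \ref{d-fpp}.
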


\begin{remark} \label{NP-extent}
Recall that the class (NP) is quite large and contains:
\ben
\item
Asplund (hence, also reflexive) Banach spaces.
\item
Frechet differentiable
spaces.
\item
Semireflexive locally convex spaces.
\item
Quasi-Montel (in particular, nuclear) spaces.
\item
Locally convex spaces $V$ having uniformly Lindel\"{o}f $V^*$
(equivalently,
$V^*$ is a subspace in a product of separable locally convex spaces).
\een
The class (NP) is closed under subspaces,
continuous bound covering linear operators, products
and locally convex direct sums. See \cite{me-fr} for more details.
\end{remark}

\section{Hereditary nonsensitivity and invariant measures}
\label{Sec-HNS}

\subsection{Affine dynamical systems admitting a fixed point}
In Theorem \ref{general-fpt} and its prototype \ref{Ryll-Na}
an additional ``external" condition is imposed on the affine dynamical system $Q$.
The following proposition characterizes, in the case of a group
action, those affine dynamical systems which admit a fixed point.

 \begin{prop} \label{prop-fp}
 Let $Q$ be an affine compact $G$-system, where $G$ is a
 group.
 Then the following conditions are equivalent:
 \ben
 \item $Q$ contains a fixed point.
 \item $Q$ contains a scattered
compact subsystem.
 \item $Q$ contains a HNS compact subsystem.
 \item $Q$ contains an equicontinuous compact subsystem.
 \item $Q$ contains a distal compact subsystem.
 \item
 There exist a compact subsystem (minimal subsystem) $Y \subset X$,
 a uniform space $(M,\xi)$ with a $\xi$-distal action of $G$ on $M$,
 and an injective
$G$-map $\a: Y \to M$ such that $Y$ is $(\tau,\xi)$-fragmented
(resp., $(\tau,\xi)$-nonsensitive).
 \item
 $Q$ contains a compact subsystem admitting an invariant probability measure.
\een
\end{prop}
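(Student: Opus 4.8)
The plan is to prove the chain $(1)\Rightarrow(2)\Rightarrow(3)\Rightarrow(6)\Rightarrow(5)\Rightarrow(7)\Rightarrow(1)$ and to attach condition (4) through the two auxiliary implications $(1)\Rightarrow(4)$ and $(4)\Rightarrow(5)$. Everything leaving (1) is immediate: a fixed point $q_0\in Q$ is a one-point compact subsystem $\{q_0\}$, which is simultaneously scattered, HNS, equicontinuous and distal, and which carries the invariant point mass $\delta_{q_0}$; this supplies $(1)\Rightarrow(2)$, $(1)\Rightarrow(4)$, and the remaining conditions for free.

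Two of the steps merely relate the subsystem types. For $(2)\Rightarrow(3)$ I would invoke Lemma \ref{fr-cases}.3: a scattered compact space is $(\tau,\xi)$-fragmented for \emph{every} uniformity $\xi$; taking $\xi=\mu_G$, the structure generated by the entourages $\epsilon_G=\{(x,x'):(gx,gx')\in\epsilon\ \forall g\in G\}$, and recalling that HNS is precisely fragmentability of $\id:(Y,\mu)\to(Y,\mu_G)$, a scattered subsystem is HNS. For $(4)\Rightarrow(5)$ I would use the standard fact that an equicontinuous group action is distal: if $x\neq y$ and $(gx,gy)$ were $\epsilon$-close for some $g$, then uniform equicontinuity of $\lambda_{g^{-1}}$ would force $(x,y)$ into a prescribed entourage, a contradiction.

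The core is $(3)\Rightarrow(6)$. Given a HNS compact subsystem $Y$, I would pick (by Zorn's lemma) a minimal subsystem $Y_0\subseteq Y$; being a closed subsystem it is nonsensitive, i.e. $(\mu,\mu_G)$-nonsensitive with respect to $\id:(Y_0,\mu)\to(Y_0,\mu_G)$. The decisive point is that the $G$-action on $(Y_0,\mu_G)$ is $\mu_G$-distal: because $G$ is a \emph{group}, each $\epsilon_G$ is $G$-invariant, so for $x\neq y$ and $\epsilon\in\mu$ with $(x,y)\notin\epsilon$ one gets $(gx,gy)\notin\epsilon_G$ for every $g$ (otherwise translating by $g^{-1}$ would place $(x,y)$ inside $\epsilon$). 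Taking $M=(Y_0,\mu_G)$ and $\a=\id$ (which is injective) realizes condition (6). Then $(6)\Rightarrow(5)$ is Lemma \ref{lifting}: passing if necessary to a minimal nonsensitive $Y$ (fragmentability being hereditary), the injective $S$-map into the $\xi$-distal $M$ lifts distality to $(Y,\mu)$, yielding a distal compact subsystem. Next $(5)\Rightarrow(7)$ is Furstenberg's Theorem \ref{Furst}, and $(7)\Rightarrow(1)$ follows by taking the barycenter of an invariant measure $\nu\in P(Y)$ under the affine continuous $G$-map $b:P(Y)\to\overline{co}(Y)\subseteq Q$: its value $b(\nu)$ is a fixed point of $Q$.

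I expect the main obstacle to be the distality of the shadow action on $(Y_0,\mu_G)$ used in $(3)\Rightarrow(6)$. This is the only place where invertibility in $G$ is genuinely exploited, and it is exactly the step that fails for a general semigroup, which is why the proposition is stated for groups. Once this is in place, the remainder is bookkeeping around the already-established Lemmas \ref{lifting} and \ref{fr-cases} and Theorem \ref{Furst}.
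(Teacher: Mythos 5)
Your argument is correct, and every step checks out; in particular the crucial observation in $(3)\Rightarrow(6)$ is sound: for a \emph{group} action (with the identity acting as the identity map), if $(x,y)\notin\epsilon$ then $(gx,gy)\notin\epsilon_G$ for all $g$, since $(gx,gy)\in\epsilon_G$ would force $(x,y)=(g^{-1}gx,\,g^{-1}gy)\in\epsilon$; note also that $\epsilon_G\subseteq\epsilon$ guarantees that $\mu_G$ is separated, so $(Y_0,\mu_G)$ is a legitimate uniform space, and you correctly locate the two places where invertibility is used (here and in $(4)\Rightarrow(5)$).

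However, your route through the equivalences is genuinely different from the paper's. The paper proves the single linear cycle $(1)\Rightarrow(2)\Rightarrow(3)\Rightarrow(4)\Rightarrow(5)\Rightarrow(6)\Rightarrow(7)\Rightarrow(1)$: its step $(3)\Rightarrow(4)$ invokes the nontrivial structure theorem \cite[Lemma 9.2.3]{GM1} (every minimal HNS compact $G$-system is equicontinuous), its step $(5)\Rightarrow(6)$ is the trivial one ($\a=\id$ into $M=X$ with the compatible uniformity), and $(6)\Rightarrow(7)$ is delegated to Theorem \ref{afpt}. You bypass \cite[Lemma 9.2.3]{GM1} entirely: since HNS is, by definition, fragmentability of $\id:(Y,\mu)\to(Y,\mu_G)$, and since the $\mu_G$-action is automatically distal for group actions, $(3)\Rightarrow(6)$ becomes elementary, and the direct implication $(6)\Rightarrow(5)$ — which the paper never proves directly, recovering it only around the cycle — follows from Lemma \ref{lifting}. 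Condition $(4)$ then becomes a spur attached via the trivial $(1)\Rightarrow(4)$ and the standard $(4)\Rightarrow(5)$, which is logically unobjectionable since the cycle still closes through $(5)$. The trade-off: your proof is more self-contained, resting only on Lemma \ref{lifting}, Lemma \ref{fr-cases}.3, Furstenberg's Theorem \ref{Furst} and the barycenter map, with the package of Theorem \ref{afpt} unbundled into $(6)\Rightarrow(5)\Rightarrow(7)$; the paper's proof, at the cost of the external citation, extracts the stronger structural information that minimal subsystems of HNS systems are actually \emph{equicontinuous}, making $(4)$ an intrinsic consequence of $(3)$ rather than a byproduct of $(1)$.
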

\begin{proof} (1) $\Rightarrow$ (2) Is trivial.

(2) $\Rightarrow$ (3) Every scattered compact $G$-system $X$ is HNS.
In fact, observe that $X$, being scattered, is $(\tau, \xi)$-fragmented (Lemma \ref{fr-cases}.3) for any
uniform structure $\xi$ on the set $X$.
Now see the definition of HNS as in Subsection \ref{sens}.

\nt \emph{A second proof:}
As $C(X)$ (by Lemma \ref{fr-cases}.3) is Asplund,
the regular dynamical system representation of
$G$ on $C(X)$ ensures that $X$ is Asplund representable.
This implies that $X$ is HNS by \cite[Theorem 9.9]{GM1}.

(3) $\Rightarrow$ (4)
Assume that $Q$ contains a HNS compact subsystem $X$.
Then any minimal compact $G$-subsystem $Y$ of $X$ is equicontinuous by
\cite[Lemma 9.2.3]{GM1}.

(4) $\Rightarrow$ (5) This is well known and easy to see for
\emph{group} actions on compact spaces
(it is not, in general, true for semigroup actions).

(5) $\Rightarrow$ (6) Consider the identity map $\a: X \to M=X$ and
let $\xi$ be the compatible uniform structure on $X$.

(6) $\Rightarrow$ (7) Follows from Theorem \ref{afpt} and
Definition \ref{d:afpt}.

(7) $\Rightarrow$ (1) As in the proof of Theorem \ref{afpt}
use the barycenter map.
\end{proof}

\begin{prop} \label{inv-m}
Every HNS compact $G$-system $X$ admits an invariant probability measure.
\end{prop}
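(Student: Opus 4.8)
The plan is to deduce Proposition \ref{inv-m} directly from the characterization already established in Proposition \ref{prop-fp}, applied to the universal affine compactification of $X$. Recall that for a compact $G$-system $X$ the space $P(X)$ of probability measures is an affine compact $G$-system, and the natural embedding $\delta : X \to P(X)$ realizes $(P(X),\delta)$ as an $S$-affine (here $G$-affine) compactification. By Definition \ref{d:afpt}, showing that $X$ admits a $G$-invariant probability measure is exactly the same as showing that $P(X)$ contains a fixed point. So the goal reduces to exhibiting a fixed point in the affine compact $G$-system $Q := P(X)$.

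First I would observe that $X$ itself, via $\delta$, sits inside $Q = P(X)$ as a compact $G$-subsystem, since $\delta$ is a $G$-equivariant homeomorphism onto its image. By hypothesis $X$ is HNS, and hereditary nonsensitivity is preserved under $G$-isomorphism, so $\delta(X) \cong X$ is a HNS compact $G$-subsystem of $Q$. This places us squarely in condition (3) of Proposition \ref{prop-fp}: the affine compact $G$-system $Q$ contains a HNS compact subsystem. Since $G$ is a group, the equivalence (3) $\Rightarrow$ (1) of that proposition applies, and we conclude that $Q = P(X)$ contains a fixed point.

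A fixed point of $P(X)$ is precisely a $G$-invariant probability measure on $X$, which is exactly the conclusion we want. Thus the argument is a short invocation of the preceding machinery: embed $X$ into its measure system, note that the embedded copy is HNS, and apply the characterization.

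The only point requiring any care — and the step I would flag as the main (though minor) obstacle — is the verification that hereditary nonsensitivity transfers from $X$ to its image $\delta(X)$ inside $P(X)$. This is a transport-of-structure issue: one must check that $\delta$ being a $G$-isomorphism of compact systems carries the unique compatible uniform structure of $X$ to that of $\delta(X)$ and hence preserves the NS property on every closed $G$-subsystem. This is routine, since on compact Hausdorff spaces the compatible uniformity is unique and any topological $G$-isomorphism is automatically a uniform isomorphism. Once this is in place, the reliance on Proposition \ref{prop-fp} does the rest. Note that this whole scheme ultimately rests on Furstenberg's theorem \ref{Furst}, which enters through the proof of $(6) \Rightarrow (7)$ inside Proposition \ref{prop-fp}.
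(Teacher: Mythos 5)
Your argument is correct and is essentially identical to the paper's own proof, which likewise embeds $X$ as a HNS subsystem of the affine compact $G$-system $P(X)$ and invokes Proposition \ref{prop-fp} (implication (3) $\Rightarrow$ (1)). Your extra verification that $\delta$ transports hereditary nonsensitivity is a correct and harmless elaboration of what the paper leaves implicit.
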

\begin{proof}
The compact affine $G$-system $P(X)$ contains $X$
as a subsystem which is HNS. Thus, Proposition \ref{prop-fp} applies.
\end{proof}


\subsection{HNS dynamical systems, Asplund functions and amenability of $Asp(G)$}
\label{s:Asp}

In this subsection $G$ will denote a topological group and a ``$G$-system"
will mean a dynamical system with a jointly continuous action. 
In fact the results remain true for semitopological groups 
\footnote{A semitopological group is a group endowed with a topology
with respect to which multiplication is separately continuous.}
but for simplicity we consider only the case of topological groups. 


Recall that a (continuous, bounded ) real valued function $f: G \to \R$  
is an {\em Asplund function}, if there is a HNS compact $G$-system $X$,
a continuous function $F: X \to \R$, and a point $x_0 \in X$ such that
$F(gx_0)=f(g)$, for every $g \in G$. Every $f \in Asp(G)$ is right and left uniformly continuous. 
The collection $Asp(G)$ of Asplund functions is a uniformly closed $G$-invariant subalgebra of
$l_{\infty}(G)$ and $Asp(G)$ contains the algebra $WAP(G)$ of weakly
almost periodic functions on $G$. Refer to \cite{Me-nz, GM1} for more details.

A left translation $G$-invariant normed subspace $F \subset l_{\infty}(G)$
is said to be \emph{left amenable}
(see for example \cite{Gr} or  \cite{BJM})
if the affine compact $G$-system $Q=M(F)$ of means on $F$ has a fixed point,
a \emph{left invariant mean}. It is a classical result of Ryll-Nardzewski
\cite{Ryll}, that $WAP(G)$ is left amenable
\footnote{Note that $WAP(G)$, in addition, is also right amenable \cite{Ryll}.}.
We extend this result to $Asp(G)$.

\begin{prop} \label{Asp-amen}
$Asp(G)$ is left amenable.
\end{prop}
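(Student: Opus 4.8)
The plan is to exhibit a suitable affine compactification of $G$ and apply the machinery already developed, especially Proposition~\ref{inv-m} and Proposition~\ref{prop-fp}. The left invariant mean we seek is precisely a fixed point in the affine compact $G$-system $Q = M(Asp(G))$ of means on $Asp(G)$, so by Definition~\ref{d:afpt} and the discussion of affine compactifications it suffices to produce an $S$-invariant (here $G$-invariant) probability measure, or equivalently a fixed point, inside $Q$. My strategy is to realize $Q$ as arising from a HNS compact $G$-system and then invoke Proposition~\ref{inv-m}, which guarantees that every HNS compact $G$-system admits an invariant probability measure.

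First I would identify the correct compact $G$-system. Since $Asp(G)$ is, by definition, the algebra of functions coming from HNS compact $G$-systems, the natural candidate is the Gelfand space (maximal ideal space) $X = |Asp(G)|$ of the commutative $C^*$-algebra $Asp(G)$; this is a compact space carrying a jointly continuous $G$-action, and it is the universal HNS compactification of $G$. The key structural fact I would rely on is that $X$ is itself HNS: because $Asp(G)$ is generated by functions pulled back from HNS systems and HNS is preserved under the relevant operations (inverse limits, subsystems, factors), the universal object $X$ remains HNS. I would cite the development in \cite{GM1} for this, analogous to how $WAP(G)$ gives rise to a universal reflexively-representable (hence WAP) compactification.

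With $X$ a HNS compact $G$-system in hand, Proposition~\ref{inv-m} immediately yields an invariant probability measure $\mu \in P(X)$. Such a $\mu$ is a $G$-fixed point of the affine compact $G$-system $P(X)$, and integrating against $\mu$ defines a mean on $C(X) \cong Asp(G)$ which is left $G$-invariant. Unwinding the identification, this invariant mean is exactly a fixed point of $Q = M(Asp(G))$, so $Asp(G)$ is left amenable.

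The main obstacle is verifying that the universal Gelfand system $X = |Asp(G)|$ is genuinely HNS, rather than merely a factor or limit of HNS systems; one must confirm that hereditary nonsensitivity passes to this universal construction. I expect this to follow from the representation-theoretic characterization quoted in the introduction, namely that $Asp(G)$ consists of matrix coefficients of co-representations on Asplund Banach spaces, together with \cite[Theorem 9.9]{GM1} linking Asplund representability to HNS; once $X$ is Asplund representable it is HNS, and the rest is the routine translation between invariant measures, barycenters, and invariant means already rehearsed in the proof of Proposition~\ref{prop-fp}.
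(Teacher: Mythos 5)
Your proposal is correct and follows essentially the same route as the paper: both identify $X=|Asp(G)|$ as the Gelfand space of the algebra, invoke \cite[Theorem 9.9]{GM1} to conclude that $X$ is HNS (exactly the citation that resolves the ``main obstacle'' you flag), and then extract a fixed point of $Q=M(Asp(G))$ from the HNS fixed-point machinery. The only cosmetic difference is that you route through Proposition~\ref{inv-m} and the measure-to-mean correspondence on $C(X)\cong Asp(G)$, while the paper embeds $X$ directly into $Q$, passes to a minimal subsystem, and applies Proposition~\ref{prop-fp} via equicontinuity (\cite[Lemma 9.2.3]{GM1}) --- the same argument, since Proposition~\ref{inv-m} is itself an immediate instance of Proposition~\ref{prop-fp}.
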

\begin{proof}
Denote by
$X:=|Asp(G)|$,
the Gelfand space of the
algebra $Asp(G)$.
By \cite[Theorem 9.9]{GM1} the dynamical system $X$ is HNS.
The Gelfand space $X$ can be identified with the space of
multiplicative means on the algebra
$V:=Asp(G)$. Thus $X$ is
embedded as a $G$-subsystem in the compact affine $G$-system $Q:=M(V)$ of means on $V$.

Let $Y$ be a minimal $G$-subsystem of $X$.
Then the
$G$-system
$Y$ is HNS as well.
Furthermore, $Y$ is equicontinuous by \cite[Lemma 9.2.3]{GM1}.
Thus $Q$ contains an equicontinuous compact $G$-subsystem $Y$ and
Proposition \ref{prop-fp} implies that $Q$ has a fixed point.
\end{proof}

\begin{cor} \label{Ryll-wap}
\emph{(Ryll-Nardzewski \cite{Ryll})}
$WAP(G)$ is left amenable.
\end{cor}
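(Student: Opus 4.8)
The plan is to derive Corollary \ref{Ryll-wap} as an immediate consequence of Proposition \ref{Asp-amen}, exploiting the inclusion of algebras already recorded in the text. First I would recall that $WAP(G) \subset Asp(G)$, a containment noted in Subsection \ref{s:Asp} and stemming from the fact that every reflexive Banach space is Asplund. The strategy is then to transfer left amenability from the larger algebra to the smaller one, so the key observation to isolate is that left amenability passes to left-translation-invariant subalgebras.

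\begin{proof}
Since every reflexive Banach space is Asplund, we have the inclusion $WAP(G) \subset Asp(G)$ of left translation $G$-invariant subalgebras of $l_\infty(G)$. By Proposition \ref{Asp-amen}, $Asp(G)$ is left amenable, so it admits a left invariant mean $m$. The restriction $m|_{WAP(G)}$ is then a left invariant mean on $WAP(G)$: it is a mean because restriction of a positive unital functional to a subalgebra containing the constants is again a positive unital functional, and it is left invariant because the left translation action on $WAP(G)$ is the restriction of the action on $Asp(G)$. Hence $WAP(G)$ is left amenable.
\end{proof}

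The main conceptual step is simply recognizing that a left invariant mean on a bigger invariant space restricts to one on any smaller invariant subspace, which reduces the corollary to the already-established proposition. I expect no genuine obstacle here: the only point requiring care is that restriction preserves the mean property (positivity, unitality, and $G$-invariance), and that $WAP(G)$ indeed contains the constant functions and is closed under the left translation action, both of which hold since $WAP(G)$ is a unital left-invariant subalgebra of $l_\infty(G)$. An alternative route would be to reprove amenability of $WAP(G)$ directly via Proposition \ref{prop-fp}, using that the Gelfand space of $WAP(G)$ is a WAP and hence HNS $G$-system; but the restriction argument is shorter and makes transparent why Ryll-Nardzewski's classical result is a special case of the extension to $Asp(G)$.
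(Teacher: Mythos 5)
Your proof is correct and matches the paper's intended argument: the corollary is stated immediately after Proposition \ref{Asp-amen} with no separate proof precisely because, as you show, the inclusion $WAP(G) \subset Asp(G)$ (every reflexive space is Asplund) lets one restrict a left invariant mean on $Asp(G)$ to $WAP(G)$. Your careful verification that restriction preserves positivity, unitality, and $G$-invariance fills in exactly the routine details the paper leaves implicit.
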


\begin{remarks} \label{remarks}
(1) \ Examples constructed in \cite{GW2} (together with Theorem 11.1 of
\cite{GM1})
show that a
point transitive HNS $\Z$-dynamical system can contain uncountably many
minimal subsets (unlike the situation in a point-transitive WAP-dynamical system
where there is always a unique minimal set). As a $\Z$-dynamical system, each
of these minimal sets supports an invariant measure, and since
our dynamical systems are factors of the universal HNS dynamical system $|Asp(\Z)|$,
it follows that the latter has uncountably many distinct invariant
measures. As there is a one-to-one correspondence between
invariant probability measures on $|Asp(G)|$ and invariant means on
the algebra $Asp(G)$ we conclude that, unlike $WAP(\Z)$ where
the invariant mean is unique, the algebra $Asp(\Z)$ admits uncountably
many invariant means.

(2)\
The group $G$ in Proposition \ref{Asp-amen} and Corollary \ref{Ryll-wap}
cannot be replaced, in general, by semigroups. Indeed recall \cite[p.147]{BJM}
that even for finite semigroups the algebra $AP(S)$ of the
almost periodic functions need not be left (right) amenable.

\end{remarks}


\section{Concerning tame dynamical systems}
\label{Sec-tame}

As we have already mentioned, a compact $G$-system $X$ is HNS iff it admits
sufficiently many representations on Asplund Banach spaces.
In a recent work \cite{GM-rose} we have shown that an analogous statement
holds for the family of tame dynamical systems and the larger class of Rosenthal Banach spaces.
A (not necessarily metrizable) compact $G$-system $X$ is said to be
\emph{tame} if for every element $p \in E(X)$ of the enveloping semigroup
$E(X)$ the function $p: X \to X$ is fragmented (equivalently,
Baire 1, for metrizable $X$;
see Lemma \ref{fr-cases}.5).

The algebra $Tame(G)$ of tame functions coincides with the collection of functions
which appear as matrix coefficients of continuous co-representations of
$G$ on Rosenthal Banach spaces.

One may ask if Propositions \ref{prop-fp}, \ref{inv-m} and  \ref{Asp-amen}
can be extended from HNS to tame dynamical systems.
The following counterexample shows that in general this is not the case.

\begin{ex} \label{ex}
There exists a tame minimal compact metric $G$-system $X$ such that
$P(X)$ does not have a fixed point (equivalently, $X$ does not have an
invariant probability measure).
\end{ex}
\begin{proof}
Take $X  =  \P^1$ to be the real projective line: all lines through the
origin in $\R^2$. Let $T$ be a parabolic M\"{o}bius transformation
(with a single fixed point), let $R = R_\alpha$ be a M\"{o}bius transformation
which corresponds to an irrational rotation of the circle.
Let $G =\langle T,R \rangle$ be the subgroup of $Homeo(X)$
generated by $T$ and $R$.
It is easy to see that the dynamical system $(G,X)$ is minimal.
Furthermore, every element $p$ of $E(X)$, the enveloping
semigroup of $(G,X)$, is a linear map. It can be shown that $p$ is
either in $GL(2,\R)$ or it maps all of $X \setminus \{x_0\}$ onto
$x_1$, where $x_0$ and $x_1$ are points in X. In particular every
element of the enveloping semigroup $E(X)$ is of Baire class 1.
This last fact implies that $X$ is tame. It is easily checked that $(G,X)$ is
\emph{strongly proximal}
in the sense of \cite{Gl-book} (that is, $P(X)$, as a $G$-system, is proximal),
and that $X$ is the unique minimal subset of $P(X)$.
Thus every fixed point of $P(X)$ is contained in $X$ and,
as $X$ is minimal, it follows that $X$ is trivial, a contradiction.
\end{proof}

\begin{cor}
There exists a
finitely generated
group $G$ for which the algebra $Tame(G)$ is not amenable.
\end{cor}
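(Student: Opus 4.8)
The plan is to read this off Example \ref{ex} via the universality of the algebra $Tame(G)$, exactly paralleling the passage from Proposition \ref{inv-m} to Remarks \ref{remarks}(1) in the HNS setting. I would take $G=\langle T,R\rangle \subset \Homeo(\P^1)$ to be the two-generator group produced in Example \ref{ex}; it is finitely generated by construction, and $X=\P^1$ is a tame minimal compact metric $G$-system with no invariant probability measure. I claim this single system already obstructs left amenability of $Tame(G)$.

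First I would recall the identification of left amenability with the existence of an invariant measure on the universal object. The algebra $Tame(G)$ is a uniformly closed, left-translation-invariant, unital subalgebra of $l_\infty(G)$, hence a commutative C$^*$-algebra; its Gelfand space $|Tame(G)|$ is a compact $G$-system with $Tame(G)\cong C(|Tame(G)|)$. A left invariant mean on $Tame(G)$ is precisely a $G$-invariant state, which by the Riesz representation theorem corresponds to a $G$-invariant probability measure on $|Tame(G)|$. Thus left amenability of $Tame(G)$ is equivalent to the statement that $|Tame(G)|$ carries an invariant probability measure, just as in the discussion of $|Asp(\Z)|$ in Remarks \ref{remarks}(1).

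Next I would exhibit $X$ as a factor of $|Tame(G)|$. Fix a point $x_0\in X$; since $X$ is minimal, $x_0$ is transitive, so the orbit map $g\mapsto gx_0$ has dense image. Because $X$ is tame, for every $F\in C(X)$ the matrix coefficient $g\mapsto F(gx_0)$ lies in $Tame(G)$ by the very definition of the tame algebra. Density of the orbit makes $F\mapsto (g\mapsto F(gx_0))$ injective, so this is a $G$-equivariant algebra embedding $C(X)\hookrightarrow Tame(G)\cong C(|Tame(G)|)$, which dualizes to a continuous surjective $G$-map $\pi:|Tame(G)|\to X$. Now if $Tame(G)$ were left amenable, the invariant measure $\mu$ on $|Tame(G)|$ would push forward to the invariant measure $\pi_*\mu$ on $X$, contradicting Example \ref{ex}. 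Hence $Tame(G)$ is not left amenable.

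The only step requiring care is the factorization $\pi:|Tame(G)|\to X$, i.e. the fact that a point-transitive tame system is a factor of the universal tame system. This is the tame analogue of the universality used for $Asp(G)$ and rests on the same Gelfand-duality argument; its essential input is that the coordinate functions $g\mapsto F(gx_0)$ are genuinely tame, which is immediate from the definition of $Tame(G)$ once one knows that $X$ itself is tame. I would cite \cite{GM-rose} for the representation-theoretic characterization of $Tame(G)$ that makes this precise.
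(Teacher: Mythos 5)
Your proposal is correct and follows essentially the same route as the paper: the paper likewise takes the two-generator group of Example \ref{ex}, identifies left amenability of $Tame(G)$ with the existence of an invariant probability measure on the Gelfand space $|Tame(G)|$, and pushes such a measure forward along the universal factor map $\phi:|Tame(G)|\to X$ to contradict Example \ref{ex}. The only difference is that you spell out the universality step (the embedding $C(X)\hookrightarrow Tame(G)$ via matrix coefficients and its dualization), which the paper simply asserts as the universal property of $|Tame(G)|$; your verification of it is sound.
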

\begin{proof}
In Example \ref{ex} we described a metric tame minimal $G$-system $X$,
with $G$ a group generated by two elements, which does not
admit an invariant probability measure.
The Gelfand space $|Tame(G)|$ is the universal point-transitive
tame $G$-system; i.e., for every point-transitive tame
$G$-system $(G,Z)$ there is a surjective homomorphism $|Tame(G)| \to Z$.
In particular, we have such a homomorphism $\phi: |Tame(G)| \to X$.
Now, the amenability of $Tame(G)$ is equivalent to
the existence of a $G$-invariant mean on $Tame(G)$ which, in turn,
is equivalent to the existence of a $G$-invariant measure on $|Tame(G)|$.
However, if $\mu$ is such a measure then its image
$\nu:=\phi_*(\mu)$
is an invariant measure on $X$; but this contradicts Example \ref{ex}.
\end{proof}

Since every tame compact metric $G$-system admits a
faithful
representation on a Rosenthal Banach space \cite{GM-rose} it follows from Example \ref{ex}
that Rosenthal Banach spaces need not have the dual system fixed point property.

%


\bibliographystyle{amsplain}

\end{document}